\documentclass[11pt,a4paper,twocolumn]{article}
\usepackage[a4paper,margin=1.8cm]{geometry}
\usepackage{titlesec}
\titlespacing*{\section}{0pt}{2ex}{1ex}
\usepackage{amsmath,amssymb,amsthm}
\newtheorem{theorem}{Theorem}

\usepackage{amsthm}
\newtheorem{proposition}{Proposition}
\newtheorem*{remark*}{Remark}
\newtheorem{principle}[theorem]{Principle}

\usepackage[T1]{fontenc}
\usepackage[utf8]{inputenc}
\usepackage{lmodern}

\usepackage{abstract}

\date{}
\title{\textbf{First-Order Geometry, Spectral Compression, and Structural Compatibility under Bounded Computation}}
\author{
	\textbf{Changkai Li}\\
	\texttt{lck271828@gmail.com}\\
	\texttt{(Date:February,24,2026)}
}

\begin{document}
\twocolumn[
 \maketitle
 \vspace{-3 em}
 
 \begin{abstract}
 	\vspace{-1em}
 	Optimization under structural constraints is typically analyzed through projection or penalty methods, obscuring the geometric mechanism by which constraints shape admissible dynamics. We propose an operator-theoretic formulation in which computational or feasibility limitations are encoded by self-adjoint operators defining locally reachable subspaces. In this setting, the optimal first-order improvement direction emerges as a pseudoinverse-weighted gradient, revealing how constraints induce a distorted ascent geometry. We further demonstrate that effective dynamics concentrate along dominant spectral modes, yielding a principled notion of spectral compression, and establish a compatibility principle that characterizes the existence of common admissible directions across multiple objectives. The resulting framework unifies gradient projection, spectral truncation, and multi-objective feasibility within a single geometric structure.
 	
 \end{abstract}
 ]

\section{Introduction}

Many optimization and dynamical frameworks implicitly assume
unrestricted computational capacity.
Under such assumptions, the gradient
$\nabla J(\theta)$
directly determines the first-order admissible strategy direction.
However, when computational resources are bounded,
the set of admissible variations is constrained
by structural limitations on accessible directions.

The presence of computational constraints
raises a structural question:
how should first-order admissible strategy directions be characterized
when only a restricted subspace of variations is available?
While constrained optimization and related bounded models
have been studied in various contexts,
a unified geometric characterization of
computationally constrained first-order structure
remains incomplete.
In particular, three aspects require clarification:
(i) the intrinsic form of the constrained first-order admissible strategy direction,
(ii) the possibility of compressing this direction into
a finite structural representation,
and (iii) the compatibility of multiple structural constraints.

\textbf{Related Work:}
Optimization on smooth manifolds has been extensively studied in the context of Riemannian optimization \cite{absil2008optimization,boumal2023introduction}. 
Classical constrained first-order methods and projection-based approaches are treated in standard convex optimization literature \cite{boyd2004convex}. 
In parallel, bounded rationality and computational constraint models have been discussed in economic and decision-theoretic contexts \cite{simon1957models,gigerenzer2001bounded}. 
The present work differs in that it isolates a structural first-order geometry induced by computational accessibility and studies its spectral compression and multi-constraint compatibility properties.

Against this background, we now formalize the structural framework developed in this paper. The main contributions are as follows.

First, Theorem~1 establishes that
the unique first-order admissible strategy direction
under computational constraints
is given by the pseudoinverse-weighted gradient
\[
\Delta\theta^\star \propto H_C^\dagger(\theta)\nabla J(\theta).
\]

Second, Theorem~2 shows that this direction
admits a low-rank spectral approximation.
The resulting rule kernel
provides a compressed representation
with explicit residual bounds.

Third, Principle~3 introduces
a structural compatibility threshold
characterized by a critical coupling parameter
governing the existence of a common admissible strategy tangent direction.

Together, these results provide
a three-layer structural decomposition
of bounded computational systems:
constrained first-order geometry,
spectral rule compression,
and multi-constraint compatibility.

The remainder of the paper is organized as follows.
Section~2 presents the problem statement and the minimal geometric setting.
Section~3 develops the geometric structure induced by bounded computation.
Section~4 establishes the first-order constrained optimal direction (Theorem~1).
Section~5 proves the low-rank rule-kernel approximation result (Theorem~2).
Section~6 formulates the structural compatibility threshold principle (Principle~3).
Section~7 concludes with a minimal discussion summarizing the structural framework.

\section{Problem Statement and Minimal Setting}

\subsection{Multi-Agent Strategy Structure}

Let $\mathcal{A}$ denote a finite set of agents.

Each agent selects strategies from a common strategy manifold $\Theta$, assumed to be a smooth finite-dimensional manifold endowed with a Riemannian metric. The metric induces an inner product on each tangent space $T_\theta \Theta$.

A collective strategy profile is represented by a point on the strategy manifold
\[
\theta \in \Theta.
\]
Here $\theta$ denotes the strategy itself (in local coordinates on $\Theta$), rather than a preference-dependent parameterization.

We assume the existence of a payoff functional
\[
J : \Theta \rightarrow \mathbb{R},
\]
which is continuously differentiable.

\subsection{Complexity Constraint and Computational Budget}

Each strategy profile $\theta$ incurs a complexity cost measured by a functional
\[
C : \Theta \rightarrow \mathbb{R}_{\ge 0},
\]
assumed to be continuously differentiable.

The functional $C$ encodes computational cost of representing or evaluating strategies and does not modify the payoff functional $J$.

The system operates under a fixed computational budget
\[
C(\theta) \leq \kappa,
\]
where $\kappa > 0$ is a constant.

The feasible set is defined as
\[
\Theta_\kappa := \{ \theta \in \Theta \mid C(\theta) \le \kappa \}.
\]

\subsection{Local Feasibility Structure}

We consider local perturbations $\Delta \theta \in T_\theta \Theta$ such that
\[
\theta + \Delta \theta \in \Theta_\kappa.
\]

For sufficiently small perturbations, first-order feasibility requires
\[
\nabla C(\theta) \cdot \Delta \theta \le 0,
\]
where the gradient is defined with respect to the Riemannian metric.

This condition defines the local feasible tangent cone at $\theta$.

\subsection{Problem Formulation}

Given the continuously differentiable functionals $J$ and $C$ under the fixed budget $\kappa$, we seek to characterize the first-order structure of first-order admissible strategy directions in $\Theta_\kappa$.

In particular, the problem is to determine whether there exists a canonical first-order \emph{strategy variation} direction determined jointly by the gradient structure of $J$ and the computational accessibility geometry induced by bounded computation.

Throughout this paper, all first-order variations are understood as variations of strategies on the strategy manifold, rather than updates of an external parameterization.
\section{Geometric Structure of Bounded Computation}
 
 \subsection{Reachable Direction Structure}
 
 For each strategy profile $\theta \in \Theta$, we associate a computationally reachable subspace
 \[
 \mathcal{V}_\theta \subset T_\theta \Theta.
 \]
 
 We assume:
 
 \begin{itemize}
  \item[(A1)] $\mathcal{V}_\theta$ is a linear subspace of $T_\theta \Theta$.
  \item[(A2)] $\mathcal{V}_\theta$ is closed with respect to the Riemannian inner product.
  \item[(A3)] The assignment $\theta \mapsto \mathcal{V}_\theta$ is measurable.
 \end{itemize}
 
 \subsection{Computational Geometry Operator}
 
 Bounded computation does not merely restrict admissible strategy tangent directions; it induces a geometric distortion within the reachable subspace.
 
 We therefore define a constraint-induced linear operator
 \[
 H_C(\theta) : T_\theta \Theta \to T_\theta \Theta
 \]
 satisfying:
 
 \begin{itemize}
  \item[(B1)] $H_C(\theta)$ is self-adjoint.
  \item[(B2)] $H_C(\theta)$ is positive semidefinite.
  \item[(B3)] $\operatorname{Im}(H_C(\theta)) = \mathcal{V}_\theta$.
  \item[(B4)] $\ker(H_C(\theta)) = \mathcal{V}_\theta^\perp$.
 \end{itemize}
 
 Thus $H_C(\theta)$ encodes both accessibility and directional weighting induced by bounded computation.
 
 When $H_C(\theta)$ reduces to the orthogonal projection onto $\mathcal{V}_\theta$, the geometry is isotropic within the reachable subspace. In general, however, $H_C(\theta)$ may exhibit nontrivial spectral structure.
 
 \subsection{Pseudoinverse Structure}
 
 Since $H_C(\theta)$ may be rank-deficient, we define its Moore–Penrose pseudoinverse
 \[
 H_C^\dagger(\theta).
 \]
 
 The operator $H_C^\dagger(\theta)$ acts as the generalized inverse within the reachable subspace and vanishes on its orthogonal complement.
 
 The pair $(H_C(\theta), H_C^\dagger(\theta))$ defines the first-order computational geometry at strategy profile $\theta$.
 
 \subsection{Decomposition of Tangent Space}
 
 The tangent space admits the orthogonal decomposition
 \[
 T_\theta \Theta = \mathcal{V}_\theta \oplus \mathcal{V}_\theta^\perp,
 \]
 with $H_C(\theta)$ acting nontrivially only on $\mathcal{V}_\theta$.
 
 This structure determines the admissible first-order variation geometry under bounded computation.
 
 \section{Theorem 1: First-Order Constrained Optimal Direction}
 
 \subsection{Computational Effort Geometry}
 
 Fix $\theta \in \Theta$.  
 Let $H_C(\theta): T_\theta\Theta \to T_\theta\Theta$ be the self-adjoint, positive semidefinite operator defined in Section 2,
 with
 \[
 \operatorname{Im}(H_C(\theta))=\mathcal{V}_\theta,
 \qquad
 \ker(H_C(\theta))=\mathcal{V}_\theta^\perp.
 \]
 
 Define the computational effort functional
 \begin{equation}
  \mathcal{E}_\theta(\Delta\theta)
  :=
  \langle \Delta\theta,\, H_C(\theta)\Delta\theta\rangle_\theta .
 \end{equation}
 
 We restrict admissible first-order variations to the reachable subspace:
 \begin{equation}
  \mathcal{D}_\theta
  :=
  \left\{
  \Delta\theta \in \mathcal{V}_\theta
  \;\middle|\;
  \mathcal{E}_\theta(\Delta\theta)=1
  \right\}.
 \end{equation}
 
 \subsection{First-Order Admissible Strategy Direction Problem}
 
 Assume $J:\Theta\to\mathbb{R}$ is differentiable at $\theta$.
 Let $g := \nabla J(\theta)$.
 
 We consider the variational problem
 \begin{equation}
  \label{eq:varproblem}
  \max_{\Delta\theta\in\mathcal{D}_\theta}
  \;
  \langle g,\Delta\theta\rangle_\theta .
 \end{equation}
 
 \subsection{Main Result}
 
 \begin{theorem}[First-Order Constrained Optimal Direction]
  \label{thm:firstorder}
  Fix $\theta\in\Theta$ and assume $J$ is differentiable at $\theta$.
  Let $H_C(\theta)$ satisfy the properties in Section 2 and let $H_C^\dagger(\theta)$ denote its Moore--Penrose pseudoinverse.
  
  If $H_C(\theta) g \neq 0$, then the maximizers of \eqref{eq:varproblem}
  are exactly the rays generated by
  \begin{equation}
   \label{eq:optimaldirection}
   \Delta\theta^\star
   \propto
   H_C^\dagger(\theta)\, g,
  \end{equation}
  normalized so that $\mathcal{E}_\theta(\Delta\theta^\star)=1$.
  
  If $H_C(\theta) g = 0$, then
  \[
  \langle g,\Delta\theta\rangle_\theta = 0
  \quad
  \text{for all } \Delta\theta\in\mathcal{D}_\theta,
  \]
  and no admissible strategy tangent direction yields positive first-order payoff under the computational constraint.
 \end{theorem}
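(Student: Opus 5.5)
The plan is to reduce the problem to a Cauchy--Schwarz inequality for the inner product that $H_C(\theta)$ induces on the reachable subspace. Write $H := H_C(\theta)$, $V := \mathcal{V}_\theta$ and $P$ for the orthogonal projection onto $V$. First I will use the finite dimensionality of $T_\theta\Theta$ together with (B1)--(B4). Since $H$ is self-adjoint with $\ker H = V^\perp$ and $\operatorname{Im} H = V$, it maps $V$ to itself, and the restriction $H|_V : V \to V$ is injective, hence invertible. By (B2) this restriction is then positive definite. The Moore--Penrose pseudoinverse is therefore $H^\dagger = (H|_V)^{-1} P$, which gives the identities $H H^\dagger = H^\dagger H = P$ and $\operatorname{Im} H^\dagger = V$. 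It also follows that $\langle u, v\rangle_H := \langle u, Hv\rangle_\theta$ is a genuine inner product on $V$, and $\mathcal{D}_\theta$ is exactly its unit sphere.

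The main step is to rewrite the objective in this geometry. For $\Delta\theta \in V$ we have
\[
\langle g, \Delta\theta\rangle_\theta = \langle Pg, \Delta\theta\rangle_\theta = \langle H H^\dagger g, \Delta\theta\rangle_\theta = \langle H^\dagger g, \Delta\theta\rangle_H ,
\]
where the last equality uses self-adjointness of $H$ and the fact that $H^\dagger g \in V$. Cauchy--Schwarz in $\langle\cdot,\cdot\rangle_H$ then bounds the objective on $\mathcal{D}_\theta$ by $\|H^\dagger g\|_H = \langle g, H^\dagger g\rangle_\theta^{1/2}$. Equality holds if and only if $\Delta\theta = c\,H^\dagger g$ with $c > 0$, and the constraint $\mathcal{E}_\theta = 1$ forces $c = \langle g, H^\dagger g\rangle_\theta^{-1/2}$.

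The two cases split along whether $Pg$ vanishes. Because $\ker H = V^\perp$ and $V^\perp = \ker P$, the condition $Hg \neq 0$ is equivalent to $Pg \neq 0$. Since $H^\dagger g = (H|_V)^{-1} Pg$, this is in turn equivalent to $H^\dagger g \neq 0$, i.e.\ to $\langle g, H^\dagger g\rangle_\theta > 0$.
\begin{itemize}
\item If $Hg \neq 0$, the normalization above is well defined. The maximizer is then unique: the single normalized point on the ray $\{c\,H^\dagger g : c > 0\}$, with positive optimal value $\langle g, H^\dagger g\rangle_\theta^{1/2}$. This is the sense in which ``the maximizers are exactly the rays generated by'' \eqref{eq:optimaldirection}.
\item If $Hg = 0$, then $g \in V^\perp$, so $\langle g, \Delta\theta\rangle_\theta = 0$ for every $\Delta\theta \in V$, and in particular on $\mathcal{D}_\theta$. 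No admissible direction then gives positive first-order payoff.
\end{itemize}

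I expect the only delicate point to be the first step, namely justifying $HH^\dagger = P$ and the invertibility of $H|_V$. This is where (B3)--(B4) and finite dimensionality (so that $V$ is closed and the spectrum of $H|_V$ is bounded away from zero) are genuinely used. I would handle it via the spectral decomposition $H = \sum_i \lambda_i u_i u_i^{*}$, with $\lambda_i > 0$ and the $u_i$ forming an orthonormal basis of $V$. In that basis $H^\dagger = \sum_i \lambda_i^{-1} u_i u_i^{*}$, and all of the identities above can be read off directly.
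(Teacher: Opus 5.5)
Your proposal is correct and follows essentially the same route as the paper: you rewrite $\langle g,\Delta\theta\rangle_\theta$ as $\langle H_C^\dagger g,\Delta\theta\rangle_{H_C}$ via $H_C H_C^\dagger = P$, apply Cauchy--Schwarz in the $H_C$-inner product on $\mathcal{V}_\theta$, and split on whether $H_C g$ vanishes. Your extra steps (invertibility of $H_C|_{\mathcal{V}_\theta}$, the equivalence $H_C g \neq 0 \iff H_C^\dagger g \neq 0$, which the paper only asserts, and the explicit normalization constant $\langle g, H_C^\dagger g\rangle_\theta^{-1/2}$) fill in details the paper leaves implicit.
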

 
 \subsection{Proof}
 
 \begin{proof}
  Fix $\theta$ and abbreviate
  $H_C := H_C(\theta)$,
  $H_C^\dagger := H_C^\dagger(\theta)$,
  $g := \nabla J(\theta)$.
  
  Since $\mathcal{D}_\theta \subset \mathcal{V}_\theta = \operatorname{Im}(H_C)$,
  we may restrict all variations to $\operatorname{Im}(H_C)$.
  
  Because $H_C$ is self-adjoint,
  the operator $H_C H_C^\dagger$ is the orthogonal projection onto $\operatorname{Im}(H_C)$.
  Therefore for any $\Delta\theta\in\mathcal{V}_\theta$,
  \begin{equation}
   \langle g,\Delta\theta\rangle_\theta
   =
   \langle H_C H_C^\dagger g,\Delta\theta\rangle_\theta
   =
   \langle H_C^\dagger g,\; H_C \Delta\theta\rangle_\theta.
  \end{equation}
  
  Define the semi-inner product
  \[
  \langle u,v\rangle_{H_C}
  :=
  \langle u,\; H_C v\rangle_\theta.
  \]
  
  On $\operatorname{Im}(H_C)$ this form is non-degenerate.
  Indeed, if $v\in\operatorname{Im}(H_C)$ and
  $\langle v,H_C v\rangle_\theta=0$,
  then $v\in\ker(H_C)\cap\operatorname{Im}(H_C)$,
  which implies $v=0$ since
  $\ker(H_C)=\operatorname{Im}(H_C)^\perp$.
  
  Thus on $\operatorname{Im}(H_C)$ the norm
  \[
  \|v\|_{H_C}^2 := \langle v,H_C v\rangle_\theta
  \]
  is well-defined and coincides with $\mathcal{E}_\theta(v)$.
  
  For $\Delta\theta\in\mathcal{D}_\theta$ we have $\|\Delta\theta\|_{H_C}=1$.
  Applying Cauchy--Schwarz in $\langle\cdot,\cdot\rangle_{H_C}$,
  \[
  \langle g,\Delta\theta\rangle_\theta
  =
  \langle H_C^\dagger g,\Delta\theta\rangle_{H_C}
  \le
  \|H_C^\dagger g\|_{H_C}\,\|\Delta\theta\|_{H_C}
  =
  \|H_C^\dagger g\|_{H_C}.
  \]
  
  Equality holds if and only if $\Delta\theta$
  is positively colinear with $H_C^\dagger g$
  in $\operatorname{Im}(H_C)$.
  
  Therefore, if $H_C g\neq 0$
  (equivalently $H_C^\dagger g\neq 0$),
  the maximizers are exactly the rays generated by $H_C^\dagger g$,
  normalized to unit computational effort.
  
  If $H_C g = 0$,
  then $g\in\ker(H_C)$,
  and since $\Delta\theta\in\operatorname{Im}(H_C)$,
  \[
  \langle g,\Delta\theta\rangle_\theta = 0
  \]
  for all admissible strategy tangent directions,
  the first-order payoff variation is non-positive.
 \end{proof}
 
 \section{Theorem 2: Rule Kernel Approximation}
 
 \subsection{Spectral Structure of the Computational Operator}
 
 Fix $\theta \in \Theta$.
 Since $H_C(\theta)$ is self-adjoint and positive semidefinite,
 it admits the spectral decomposition
 \[
 H_C(\theta)
 =
 \sum_{i=1}^{r} \lambda_i \, u_i \otimes u_i^T,
 \]
 where:
 
 \begin{itemize}
  \item $\lambda_1 \ge \lambda_2 \ge \dots \ge \lambda_r > 0$ are the positive eigenvalues,
  \item $\{u_1,\dots,u_r\}$ form an orthonormal basis of $\operatorname{Im}(H_C(\theta))$,
  \item $\ker(H_C(\theta))$ corresponds to eigenvalue $0$.
 \end{itemize}
 
 The Moore–Penrose pseudoinverse is therefore
 \[
 H_C^\dagger(\theta)
 =
 \sum_{i=1}^{r}
 \frac{1}{\lambda_i}\,
 u_i \otimes u_i^T.
 \]
 
 \subsection{Definition of the Rule Kernel}
 
 For any $k < r$, define the rank-$k$ truncated operator
 \[
 H_{C,k}^\dagger(\theta)
 :=
 \sum_{i=1}^{k}
 \frac{1}{\lambda_i}\,
 u_i \otimes u_i^T.
 \]
 
 We refer to $H_{C,k}^\dagger(\theta)$ as the
 \emph{rank-$k$ rule kernel} associated with $H_C(\theta)$.
 
 \subsection{Main Result}
 
 \begin{theorem}[Low-Rank Approximation of First-Order Direction]
  \label{thm:rulekernel}
  Let $g = \nabla J(\theta)$.
  The first-order constrained optimal direction
  \[
  \Delta\theta^\star
  =
  H_C^\dagger(\theta)\, g
  \]
  admits the decomposition
  \[
  \Delta\theta^\star
  =
  H_{C,k}^\dagger(\theta)\, g
  +
  R_k(g),
  \]
  where the residual satisfies
  \begin{equation}
   \|R_k(g)\|^2
   =
   \sum_{i=k+1}^{r}
   \frac{1}{\lambda_i^2}
   |\langle g,u_i\rangle|^2.
  \end{equation}
  
  In particular, the operator-norm error satisfies
  \[
  \| H_C^\dagger(\theta) - H_{C,k}^\dagger(\theta) \|_{\mathrm{op}}
  =
  \frac{1}{\lambda_{k+1}}.
  \]
 \end{theorem}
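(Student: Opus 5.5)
The plan is to work entirely in the spectral decomposition of $H_C(\theta)$ given above. I will first prove the vector identity, then treat the operator-norm identity as a statement about the eigenvalues of the tail operator $H_C^\dagger(\theta)-H_{C,k}^\dagger(\theta)$.

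\textbf{Residual identity.} Define
\[
R_k(g):=\bigl(H_C^\dagger(\theta)-H_{C,k}^\dagger(\theta)\bigr)g=\sum_{i=k+1}^{r}\frac{1}{\lambda_i}\langle g,u_i\rangle\,u_i .
\]
The decomposition $\Delta\theta^\star=H_{C,k}^\dagger(\theta)g+R_k(g)$ then holds by linearity. I use the unnormalized representative of the ray from Theorem~\ref{thm:firstorder}, as the statement does. Because $\{u_{k+1},\dots,u_r\}$ is orthonormal, Pythagoras gives
\[
\|R_k(g)\|^2=\sum_{i=k+1}^{r}\lambda_i^{-2}|\langle g,u_i\rangle|^2 .
\]
This step is routine.

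\textbf{Operator norm.} Set $D_k:=H_C^\dagger(\theta)-H_{C,k}^\dagger(\theta)=\sum_{i>k}\lambda_i^{-1}u_i\otimes u_i^T$. This operator is self-adjoint and positive semidefinite. Its eigenvalues are $\lambda_i^{-1}$ for $i>k$ and $0$ on the rest of $T_\theta\Theta$. Hence $\|D_k\|_{\mathrm{op}}$ is its largest eigenvalue, $\max_{i>k}\lambda_i^{-1}$.

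For the lower bound, test on $u_{k+1}$: since $D_ku_{k+1}=\lambda_{k+1}^{-1}u_{k+1}$, we get $\|D_k\|_{\mathrm{op}}\ge 1/\lambda_{k+1}$.

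For the upper bound, expand a unit vector $v=\sum_i c_iu_i+w$ with $w\in\ker H_C$. Then
\[
\|D_kv\|^2=\sum_{i>k}\lambda_i^{-2}c_i^2\le\Bigl(\max_{i>k}\lambda_i^{-1}\Bigr)^2 .
\]
To close the argument at the stated value, I must show $\max_{i>k}\lambda_i^{-1}=\lambda_{k+1}^{-1}$, that is, that $\lambda_{k+1}$ is the \emph{smallest} eigenvalue in the discarded tail.

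\textbf{Main obstacle.} The difficulty is this last identification. Under the paper's decreasing ordering $\lambda_1\ge\cdots\ge\lambda_r$, the reciprocals $1/\lambda_i$ increase with $i$. The maximum over the tail is therefore $1/\lambda_r$, and it coincides with $1/\lambda_{k+1}$ only when $\lambda_{k+1}=\cdots=\lambda_r$. For example, $k=r-1$ always works, as does any flat tail.

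So the argument as planned establishes $\|D_k\|_{\mathrm{op}}=1/\lambda_{k+1}$ exactly in two situations. The first is under that flat-tail condition. The second is when the truncation is read as retaining the modes with the largest reciprocal weights $1/\lambda_i$, i.e.\ with the tail indexed in increasing order of $1/\lambda_i$, so that $1/\lambda_{k+1}$ is the dominant discarded weight. Under either reading, the lower bound from $u_{k+1}$ and the upper bound from the max-eigenvalue estimate coincide, which gives the equality.

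I would state this ordering hypothesis explicitly in the write-up. In full generality the same computation gives only $\|D_k\|_{\mathrm{op}}\ge 1/\lambda_{k+1}$, with equality precisely under that hypothesis.
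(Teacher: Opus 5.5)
Your residual argument is the paper's proof: expand $H_C^\dagger g$ and $H_{C,k}^\dagger g$ in the eigenbasis, identify $R_k(g)$ as the tail, and apply orthonormality. For the operator-norm clause, the paper offers only the one-line assertion that the largest singular value of $H_C^\dagger-H_{C,k}^\dagger$ is $1/\lambda_{k+1}$. The obstacle you found is genuine and sits exactly at that step. With the paper's ordering $\lambda_1\ge\cdots\ge\lambda_r>0$, the tail operator $\sum_{i>k}\lambda_i^{-1}u_i\otimes u_i^T$ has largest eigenvalue $1/\lambda_r$. For instance, $(\lambda_1,\lambda_2,\lambda_3)=(3,2,1)$ and $k=1$ give $\|H_C^\dagger-H_{C,1}^\dagger\|_{\mathrm{op}}=1$, not $1/\lambda_2=1/2$. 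So the ``in particular'' clause is false as stated, and your analysis is correct where the paper's justification is not.

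There are two refinements to make in your write-up. First, your computation already gives the exact value $\|H_C^\dagger-H_{C,k}^\dagger\|_{\mathrm{op}}=\max_{i>k}\lambda_i^{-1}=1/\lambda_r$ under the paper's ordering. Record that identity rather than only the lower bound $\ge 1/\lambda_{k+1}$. The residual identity does not depend on the ordering, and taking $\sup_{\|g\|=1}\|R_k(g)\|$ gives the same value. Second, $\|H_C^\dagger\|_{\mathrm{op}}=1/\lambda_r$ as well. So the paper's rule kernel, which keeps the largest $\lambda_i$ and hence the smallest weights of $H_C^\dagger$, has the same operator-norm error as the zero operator for every $k<r$. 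Your second reading is therefore the right hypothesis to add: order $\lambda_1\le\cdots\le\lambda_r$, so that the retained modes carry the largest weights $1/\lambda_i$. Under it, the stated value $1/\lambda_{k+1}$ is exactly the Eckart--Young optimal rank-$k$ operator-norm error for $H_C^\dagger$.
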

 
 \subsection{Proof}
 
 \begin{proof}
  Fix $\theta$ and abbreviate $H_C := H_C(\theta)$.
  Let $g = \nabla J(\theta)$.
  
  Using the spectral representation of $H_C^\dagger$,
  \[
  H_C^\dagger g
  =
  \sum_{i=1}^{r}
  \frac{1}{\lambda_i}
  \langle g,u_i\rangle u_i.
  \]
  
  Similarly,
  \[
  H_{C,k}^\dagger g
  =
  \sum_{i=1}^{k}
  \frac{1}{\lambda_i}
  \langle g,u_i\rangle u_i.
  \]
  
  Therefore the residual equals
  \[
  R_k(g)
  =
  \sum_{i=k+1}^{r}
  \frac{1}{\lambda_i}
  \langle g,u_i\rangle u_i,
  \]
  and its squared norm is
  \[
  \|R_k(g)\|^2
  =
  \sum_{i=k+1}^{r}
  \frac{1}{\lambda_i^2}
  |\langle g,u_i\rangle|^2.
  \]
  
  Since the largest singular value of
  $H_C^\dagger - H_{C,k}^\dagger$
  is $\frac{1}{\lambda_{k+1}}$,
  the operator-norm identity follows immediately.
 \end{proof}
 
\section{Principle 3: Structural Compatibility Threshold}

\subsection{Family of Admissible Rule Cones}

Fix $\theta \in \Theta$.

Let $\{ \mathcal{K}_i \}_{i=1}^m$ be a finite family of closed convex cones in 
$\mathcal{V}_\theta \subset T_\theta\Theta$.
Assume:

\begin{itemize}
 \item[(C1)] Each $\mathcal{K}_i$ is closed and convex.
 \item[(C2)] $\mathcal{K}_i \subset \mathcal{V}_\theta$.
 \item[(C3)] $\mathcal{K}_i$ is nonempty.
\end{itemize}

\subsection{Coupling Family}

Let $\gamma \ge 0$.
A coupling family is a collection of maps
\[
L_\gamma : \mathcal{V}_\theta \to \mathcal{V}_\theta.
\]
For any set $A\subseteq \mathcal{V}_\theta$, we use the convention
\[
L_\gamma(A) := \{ L_\gamma(x) : x\in A\}.
\]

Assume:

\begin{itemize}
 \item[(D1)] $L_0 = \mathrm{Id}$.
 \item[(D2)] If $\gamma_1 \le \gamma_2$, then
 \[
 L_{\gamma_1}(\mathcal{K}_i)
 \subseteq
 L_{\gamma_2}(\mathcal{K}_i)
 \quad \text{for all } i.
 \]
 \item[(D3)] For each $i$ and $\gamma$, $L_\gamma(\mathcal{K}_i)$ is a closed convex cone.
\end{itemize}

Define the $\gamma$-coupled intersection:
\[
\mathcal{K}_{\mathrm{int}}(\gamma)
:=
\bigcap_{i=1}^m
L_\gamma(\mathcal{K}_i).
\]

\subsection{Compatibility Functional}

Define
\[
\Phi(\gamma)
:=
S\big(\mathcal{K}_{\mathrm{int}}(\gamma)\cap \mathbb{S}\big),
\]
where $\mathbb{S}$ is the unit sphere in $\mathcal{V}_\theta$ and $S(\cdot)$ is the spherical measure
(with $S(\emptyset)=0$).

\subsection{Monotonicity}

\begin{proposition}
 If $\gamma_1 \le \gamma_2$, then
 \[
 \Phi(\gamma_1)
 \le
 \Phi(\gamma_2).
 \]
\end{proposition}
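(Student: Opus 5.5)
The plan is to prove the monotonicity of $\Phi$ directly from assumption (D2), using only set inclusions and the monotonicity of the spherical measure. No spectral information from Theorems~\ref{thm:firstorder} or~\ref{thm:rulekernel} is needed.

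\emph{Step 1: inclusion of each coupled cone.} Fix $\gamma_1 \le \gamma_2$. By (D2), for every index $i \in \{1,\dots,m\}$ we have $L_{\gamma_1}(\mathcal{K}_i) \subseteq L_{\gamma_2}(\mathcal{K}_i)$.

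\emph{Step 2: inclusion of the intersections.} Intersecting over $i$ preserves inclusions: if $x \in \mathcal{K}_{\mathrm{int}}(\gamma_1)$, then $x \in L_{\gamma_1}(\mathcal{K}_i) \subseteq L_{\gamma_2}(\mathcal{K}_i)$ for all $i$. Hence
\[
\mathcal{K}_{\mathrm{int}}(\gamma_1) \subseteq \mathcal{K}_{\mathrm{int}}(\gamma_2).
\]
Intersecting both sides with the unit sphere $\mathbb{S}$ gives
\[
\mathcal{K}_{\mathrm{int}}(\gamma_1)\cap\mathbb{S} \subseteq \mathcal{K}_{\mathrm{int}}(\gamma_2)\cap\mathbb{S}.
\]

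\emph{Step 3: monotonicity of the measure.} The inequality $\Phi(\gamma_1)\le\Phi(\gamma_2)$ then follows from monotonicity of the spherical measure $S$ under inclusion. If the smaller set is empty, the convention $S(\emptyset)=0$ together with $S\ge 0$ handles that case.

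\emph{Main obstacle: measurability.} The only point needing care is that $S$ must be evaluated on measurable sets. By (D3), each $L_\gamma(\mathcal{K}_i)$ is a closed convex cone. A finite intersection of closed sets is closed, and intersecting with the closed set $\mathbb{S}$ keeps it closed. So both sets in Step~3 are closed subsets of $\mathbb{S}$, hence Borel, and $S$ is defined on them.

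This measurability check is the step I would state explicitly. The rest is a pure set-theoretic chain from (D2) through monotonicity of measures.
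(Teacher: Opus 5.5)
Your proof is correct and follows the paper's argument: (D2) gives $\mathcal{K}_{\mathrm{int}}(\gamma_1)\subseteq\mathcal{K}_{\mathrm{int}}(\gamma_2)$, intersecting with $\mathbb{S}$ preserves the inclusion, and monotonicity of $S$ finishes the proof. Your explicit measurability check via (D3) is a valid detail that the paper leaves implicit: the sets involved are finite intersections of closed sets in the finite-dimensional $\mathcal{V}_\theta$, hence Borel.
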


\begin{proof}
 By (D2),
 \[
 \mathcal{K}_{\mathrm{int}}(\gamma_1)
 \subseteq
 \mathcal{K}_{\mathrm{int}}(\gamma_2).
 \]
 Intersecting with $\mathbb{S}$ preserves inclusion. Since $S$ is monotone under inclusion, the claim follows.
\end{proof}

\subsection{Compatibility Threshold}

Define
\[
\gamma^*
:=
\inf
\left\{
\gamma \ge 0 :
\mathcal{K}_{\mathrm{int}}(\gamma) \neq \emptyset
\right\}.
\]

\begin{principle}[Structural Compatibility Threshold]
 For $\gamma < \gamma^*$, one has $\mathcal{K}_{\mathrm{int}}(\gamma)=\emptyset$.
 For $\gamma > \gamma^*$, one has $\mathcal{K}_{\mathrm{int}}(\gamma)\neq\emptyset$.
\end{principle}

\begin{remark*}
 If the set $\{\gamma\ge 0:\mathcal{K}_{\mathrm{int}}(\gamma)\neq\emptyset\}$ is closed,
 then $\mathcal{K}_{\mathrm{int}}(\gamma^*)\neq\emptyset$ and the second statement can be strengthened to
 $\gamma \ge \gamma^*$.
\end{remark*}

\section{Minimal Discussion}

We summarize the structural results established in this paper.

First, under bounded computational geometry,
Theorem 1 characterizes the unique first-order admissible strategy direction
as the pseudoinverse-weighted gradient
\[
\Delta\theta^\star \propto H_C^\dagger(\theta)\nabla J(\theta).
\]

Second, Theorem 2 shows that this direction admits
a low-rank spectral approximation.
The rule kernel $H_{C,k}^\dagger$ provides
a compressed representation of the constrained first-order strategy structure,
with explicit residual bounds.

Third, Principle 3 introduces a structural compatibility threshold
for families of admissible rule cones.
The parameter $\gamma$ governs the enlargement of constraint sets,
and the threshold $\gamma^*$ characterizes
the minimal structural coupling required
for the existence of a common admissible strategy tangent direction.

Taken together,
these results define a three-layer structural decomposition:

\begin{itemize}
 \item constrained first-order geometry,
 \item spectral rule compression,
 \item multi-constraint compatibility.
\end{itemize}

The present paper focuses exclusively on
the structural properties of bounded computational systems.
Further extensions may incorporate
dynamic evolution or semantic interpretation,
but such developments lie beyond the scope of the current structural analysis.
 
 
\bibliographystyle{plain}
\bibliography{references}
\end{document}